\documentclass[12pt]{amsart}
\usepackage[T1]{fontenc}
\usepackage{mathtools}
\usepackage{xcolor}
\usepackage{amsmath,amssymb}
\usepackage{amsthm}
\usepackage[thmtools-compat]{keytheorems}
\usepackage{bm}
\usepackage{tikz}
\usepackage{url}
\usepackage[backend=biber,style=alphabetic, maxbibnames=99, maxalphanames=6, url=false, doi=false]{biblatex}
\AtBeginBibliography{\sloppy}

\newcommand{\titletext}{\texorpdfstring{\(g\)}{g}-Positivity for Paving Matroids}
\usepackage{geometry}
\usepackage{hyperref}
\hypersetup{
  hidelinks,
  colorlinks=true,
  linkcolor=blue,
  citecolor=magenta,
  pdftitle={\titletext},
  pdfauthor={Yiyu Wang}
}
\usepackage{cleveref}

\usepackage{newpxtext}

\title{\titletext}
\author{Yiyu Wang}
\address{Department of Mathematics, The Ohio State University, 231 W. 18th Ave., Columbus, OH 43210}
\email{wang.20315@osu.edu}
\date{\today}

\declaretheorem[name=Theorem, numberwithin=section, refname={Theorem,Theorems}]{theorem}
\declaretheorem[name=Lemma, sibling=theorem, refname={Lemma,Lemmas}]{lemma}
\declaretheorem[name=Proposition, sibling=theorem, refname={Proposition,Propositions}]{proposition}
\declaretheorem[name=Corollary, sibling=theorem, refname={Corollary,Corollaries}]{corollary}
\declaretheorem[name=Conjecture, sibling=theorem, refname={Conjecture,Conjectures}]{conjecture}

\declaretheorem[name=Definition, style=definition, sibling=theorem, refname={Definition,Definitions}]{definition}

\numberwithin{equation}{section}

\crefname{section}{Section}{Sections}
\crefname{subsection}{Subsection}{Subsections}
\crefname{equation}{Equation}{Equations}

\newcommand{\R}{{\mathbb{R}}}

\newcommand{\Z}{{\mathbb{Z}}}

\begin{document}

\begin{abstract}
  We prove that the \(g\)-polynomial of every paving matroid has nonnegative coefficients. Our proof combines a deletion-contraction argument with an elementary coefficient inequality for the \(g\)-polynomials of uniform matroids.
\end{abstract}

\maketitle

\section{Introduction}

\subsection{Overview}
The hypersimplex \(\Delta_{r,n}\), the convex hull of the \(0/1\)-vectors in \(\R^n\) with exactly \(r\) ones, is the base polytope of the uniform matroid \(U_{r,n}\), and the base polytope of every matroid of rank \(r\) on \(n\) elements is a subpolytope of it. Subdivisions of \(\Delta_{r,n}\) into matroid base polytopes are fundamental objects in tropical geometry, where the regular ones correspond to tropical linear spaces \cite{speyer2008tropical}. Speyer proposed the following bound on the \(f\)-vectors of such subdivisions.

Throughout, we use the convention that \(\binom{a}{b}=0\) unless \(0\leq b\leq a\).

\begin{conjecture}[\(f\)-vector conjecture, \cite{speyer2008tropical}]
  Let \(\mathcal{S}\) be a subdivision of \(\Delta_{r,n}\) into matroid base polytopes, and for \(1\leq i\leq n\) let \(f_i\) be the number of cells of \(\mathcal{S}\) of dimension \(n-i\) that are not contained in the boundary of \(\Delta_{r,n}\). Then
  \[
    f_i\leq \binom{n-i-1}{r-i}\binom{n-r-1}{i-1}.
  \]
\end{conjecture}

To approach this conjecture, Speyer \cite{speyer2009matroid} introduced a matroid invariant \(g_M(t)\in\Z[t]\), defined through the \(K\)-theory of the Grassmannian for matroids representable over a field of characteristic zero. Later, it was extended to all matroids by Fink and Speyer \cite{fink2012kclasses}, and alternative descriptions were given in \cite{BergetEurSpinkTseng2023,ferroni2024valuative}. For our purposes, the following equivalent characterization is convenient.

\begin{definition}\label{def:g-poly}
  There is a unique way of associating to each matroid \(M\) a polynomial \(g_M(t)\) with integer coefficients, in such a way that the following properties hold:
  \begin{enumerate}
    \item If \(M\) has loops or coloops, \(g_M(t)=0\).
    \item \(g_M(t)\) is invariant under series and parallel extensions (see \cite[Corollary~9.4]{fink2012kclasses}).
    \item If \(M=M_1\oplus M_2\), then \(g_M(t)=g_{M_1}(t)g_{M_2}(t)\) (see \cite[Proposition~9.17(a)]{ferroni2024valuative}).
    \item \(M\to g_M(t)\) is covaluative.
    \item \(g_{U_{1,2}}(t)=t\).
  \end{enumerate}
  Here \(U_{1,2}\) means the uniform matroid of rank one on a two-element set. In (2), we exclude series extensions at loops and parallel extensions at coloops.
\end{definition}
Uniqueness follows from \cite[Theorem~9.15]{ferroni2024valuative}, since properties (2), (3) and (5) imply \(g_M(t)=t^c\) whenever \(M\) is a loopless and coloopless direct sum of \(c\) series-parallel matroids.

Speyer showed that the \(f\)-vector conjecture would follow from the following conjecture.

\begin{conjecture}[\(g\)-positivity conjecture, \cite{speyer2009matroid}]
  For every matroid \(M\), the polynomial \(g_M(t)\) has nonnegative coefficients.
\end{conjecture}

Speyer proved the \(g\)-positivity conjecture for matroids representable over a field of characteristic zero \cite[Proposition~3.3]{speyer2009matroid}, and Ferroni and Schröter proved it for sparse paving matroids \cite[Theorem~9.21]{ferroni2024valuative}. Fink, Shaw and Speyer \cite{fink2026omega} studied \(\omega(M)\), the coefficient of \(t^{r(M)}\) in \(g_M(t)\), and proved that \(g_M(t)\) has nonnegative coefficients if \(M/F\) is connected for every proper flat \(F\) of \(M\), assuming \(\omega(M)\geq 0\) for every matroid \(M\) \cite[Theorem~1.5]{fink2026omega}. Berget and Fink proved the nonnegativity of \(\omega(M)\) \cite[Theorem~E]{berget2025externalactivitycomplexpair}; another proof was given by Eur, Fink and Larson \cite[Corollary~1.2]{eur2025vanishingtheoremscombinatorialgeometries}. Consequently, the \(g\)-positivity conjecture holds for every matroid of rank at most three \cite[Corollary~5.39]{larson2026aspects}. It remains open in rank four and higher.

\subsection{Main result}
A matroid of rank \(r\) is \emph{paving} if all of its circuits have at least \(r\) elements. Paving matroids form a large and well-studied class of matroids. It is conjectured that asymptotically almost all matroids are sparse paving \cite{mayhew2011asymptotic}. In this note we prove the \(g\)-positivity conjecture for paving matroids.

\begin{theorem}\label{thm:main}
  For every connected paving matroid \(M\), \(g_M(t)\in \Z_{\geq 0}[t]\).
\end{theorem}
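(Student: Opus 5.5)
We argue by induction on the number of elements, using the standard relaxation structure of paving matroids. A connected paving matroid \(M\) of rank \(r\) on \(n\) elements is obtained from \(U_{r,n}\) by ``tightening'' along its stressed hyperplanes. Concretely, its hyperplanes of size at least \(r\) form a family of subsets any two of which meet in at most \(r-2\) elements. Covaluativity (property (4) of \Cref{def:g-poly}), together with the known polytope decompositions for relaxations, gives a formula
\[
  g_M(t)=g_{U_{r,n}}(t)-\sum_{H}\Bigl(\text{correction attached to the stressed hyperplane }H\Bigr).
\]
Each correction depends only on \(|H|\), \(r\) and \(n\). For \(|H|=r\), i.e.\ the sparse paving case treated by Ferroni--Schr\"oter, the correction is the \(g\)-polynomial of the cuspidal piece \(U_{r-1,r}\oplus U_{1,n-r}\) with a sign. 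We would instead phrase the argument as a deletion--contraction step, as the abstract suggests.

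Choose an element \(e\) lying in some stressed hyperplane and compare \(M\), \(M\setminus e\) and \(M/e\). Covaluativity applied to the subdivision of the base polytope of \(M\) by the hyperplane \(x_e=\) const, or equivalently the known deletion--contraction relation for \(g\) on paving matroids, expresses \(g_M\) through three pieces:
\begin{itemize}
  \item \(g_{M\setminus e}\) and \(g_{M/e}\), which are \(g\)-polynomials of paving matroids (deletion and contraction preserve paving up to removing coloops or loops);
  \item a uniform-matroid term of the shape \(t\cdot g_{U_{a,b}}(t)\) or \(g_{U_{a,b}}\) for a suitable \(a,b\), coming from the interior cell.
\end{itemize}
By induction the first two pieces are nonnegative when connected. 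Disconnected paving matroids are direct sums with a loop, coloop or a tiny factor, which we handle by properties (1) and (3). The difficulty is that the relation involves a subtraction.

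Hence the core step is an explicit coefficient inequality for uniform matroids. We use the closed formula \(g_{U_{r,n}}(t)=\sum_{i}\binom{n-i-1}{r-i}\binom{r-1}{i-1}\)-type expressions, namely the known formula \([t^i]g_{U_{r,n}}=\binom{n-r-1}{i-1}\binom{n-i-1}{r-i}\). With this formula we show that the subtracted term, summed over all stressed hyperplanes through the chosen \(e\), is dominated coefficientwise by what the inductive terms contribute. To make this work, we choose \(e\) to minimize the number of stressed hyperplanes containing it, and we use the disjointness condition (intersections of size at most \(r-2\)) to bound the number of large hyperplanes relative to \(n\).

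We expect this counting step to be the main obstacle. The target is to reduce it to a single binomial inequality of the form \([t^i]\,g_{U_{r,n}}\ge [t^i]\,g_{U_{r-1,n-1}}\) or a similar comparison. We would first try proving that inequality by direct manipulation of the product of binomials.
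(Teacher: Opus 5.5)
There is a genuine gap at the step you flag as the main obstacle, and the mechanism you sketch would not close it. When \(M/e\) and \(M\setminus e\) are both connected, the identity one actually gets is
\[
g_M=g_{M/e}+g_{M\setminus e}+t\Bigl(g_{U_{r-1,n-2}}-\sum_{H\ni e}p_{r-1,|H|-1}\Bigr),\qquad p_{s,m}:=g_{U_{s,m}}+t\,g_{U_{s-1,m-1}},
\]
where \(H\) runs over hyperplanes of size at least \(r\). Used only as nonnegative polynomials, \(g_{M/e}\) and \(g_{M\setminus e}\) cannot absorb anything, so you must show the bracket itself is nonnegative.

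Minimizing the number of stressed hyperplanes through \(e\) does not help: in a projective plane of order \(q\geq 3\), every point lies on \(q+1\) lines of size \(q+1\). A single comparison such as \([t^i]g_{U_{r,n}}\geq[t^i]g_{U_{r-1,n-1}}\) cannot control a sum over many hyperplanes. You also give no argument that a packing bound yields the required inequality for every coefficient.

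The missing observation is that the bracket is exactly the Ferroni--Schr\"oter expression \(g_{M/e}=g_{U_{r-1,n-1}}-\sum_{H\ni e}p_{r-1,|H|}\) with every size shifted down by one, and \(g_{M/e}\) is nonnegative by induction. Nonnegativity survives the shift. For \(s\geq 2\), \(k\leq s\) and \(s+k\leq h\leq m-1\),
\[
\frac{[t^k]p_{s,h-1}}{[t^k]p_{s,h}}=\Bigl(1-\frac{1}{h-s}\Bigr)\Bigl(1-\frac{s-2}{h-k-1}\Bigr)\leq\frac{m-s-k}{m-k-1}=\frac{[t^k]g_{U_{s,m-1}}}{[t^k]g_{U_{s,m}}}.
\]
This holds because the middle expression is nondecreasing in \(h\), and at \(h=m-1\) the inequality reduces to \(k\leq s\); terms with \(h<s+k\) vanish after the shift. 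Summing over \(H\ni e\) with \(s=r-1\), \(m=n-1\), \(h=|H|\leq n-2\) (no coloops) gives nonnegativity of the bracket. So the relevant comparison is at fixed rank under a unit shift of ground-set size, not across ranks.

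Your derivation of the identity and your choice of \(e\) also need repair. Cutting the base polytope by \(x_e=c\) with \(0<c<1\) does not give a matroid subdivision; the sections at \(x_e=0,1\) are just the faces \(P(M\setminus e)\) and \(P(M/e)\). The extra term is also not a single uniform term from an interior cell.

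The paper derives the identity algebraically in three steps:
\begin{enumerate}
  \item Apply \(g_M=g_{U_{r,n}}-\sum_H p_{r,|H|+1}\) to \(M\), \(M/e\) and \(M\setminus e\).
  \item Match hyperplanes: for \(M/e\), the sets \(H\setminus e\) with \(H\ni e\); for \(M\setminus e\), the \(H\not\ni e\) together with the \(H\setminus e\) with \(H\ni e\) and \(|H|\geq r+1\).
  \item Use \(g_{U_{r,m+1}}-g_{U_{r,m}}=g_{U_{r-1,m}}+t\,g_{U_{r-1,m-1}}\) and its analogue for \(p\).
\end{enumerate}

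That formula is valid only for connected paving matroids, so \(e\) must be chosen for connectivity rather than by counting hyperplanes. Some \(e\) has \(M/e\) connected: otherwise \(M^*\) is minimally connected, hence has a \(2\)-cocircuit, i.e.\ \(M\) has a \(2\)-circuit. If \(M\setminus e\) is disconnected, one shows it has a coloop \(f\) with \(\{e,f\}\) a cocircuit of \(M\). Then \(M\) is a series extension of \(M/e\) and \(g_M=g_{M/e}\). Your appeal to properties (1) and (3) does not handle this case: there \(g_{M\setminus e}=0\), but the three-term identity is simply unavailable.

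A minor point: for \(|H|=r\) the correction is \(p_{r,r+1}=t+t^2\), not \(\pm g_{U_{r-1,r}\oplus U_{1,n-r}}=\pm t^2\).
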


The connectedness assumption does no harm due to \cref{def:g-poly}.
The proof is by induction on the number of elements via deletion and contraction, starting from the formula of Ferroni and Schröter for the \(g\)-polynomial of a paving matroid \cite[Theorem~9.18]{ferroni2024valuative}. The key step, \cref{lem:key}, is an elementary statement about the \(g\)-polynomials of uniform matroids.

\section{A Deletion-Contraction Type Argument}\label{sec:proof}

Throughout this section, let \(M\) be a rank \(r\geq 3\) connected paving matroid on \(E=[n]\). We will try to relate \(g_M\) to \(g_{M/e}\) and \(g_{M\setminus e}\). We first need a few lemmas on the connectedness of the minors.

\begin{lemma}\label{lem:contraction-connected}
  Let \(M\) be a connected simple matroid. Then there is an \(e\in E\) such that \(M/e\) is connected.
\end{lemma}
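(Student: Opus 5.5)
The plan is an extremal argument combined with uncrossing of the connectivity function. Write \(r\) for the rank function of \(M\) and, for \(X\subseteq E\), set \(\lambda_M(X)=r(X)+r(E\setminus X)-r(M)\). Then \(M\) is connected if and only if \(\lambda_M(X)\geq 1\) for every nonempty proper \(X\subseteq E\). For \(e\in E\) and \(X\subseteq E\setminus e\), a direct computation gives
\[
\lambda_{M/e}(X)=r(X\cup e)+r\bigl((E\setminus X)\bigr)-r(M)-1 .
\]
Here \(E\setminus X\) contains \(e\), so this is \(\lambda_M(X\cup e)+r(X\cup e)-r(X)\) shifted appropriately. The trivial cases (\(|E|\leq 2\), where a simple connected matroid is \(U_{0,0}\), \(U_{1,1}\) or \(U_{0,1}\), or is absent) are disposed of directly, so assume \(|E|\geq 3\). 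Then \(M\) has no loops or coloops.

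\textbf{Step 1: structure of a bad element.} Suppose \(M/e\) is disconnected, with separator \(A\), and let \(B=E\setminus(A\cup e)\), both nonempty. Then
\[
r(A\cup e)+r(B\cup e)=r(M)+1 .
\]
Since \(\lambda_M(A)\geq 1\) and \(\lambda_M(B)\geq 1\), comparing with \(r(A)\leq r(A\cup e)\) forces \(r(A\cup e)=r(A)\) and \(r(B\cup e)=r(B)\). Thus \(e\in\operatorname{cl}(A)\cap\operatorname{cl}(B)\), and \(\lambda_M(A)=\lambda_M(B)=1\). Since \(M\) is simple and \(e\) is not a loop, \(e\in\operatorname{cl}(A)\) forces \(|A|\geq 2\) (a single element \(a\) with \(e\in\operatorname{cl}(\{a\})\) would make \(e,a\) parallel), and likewise \(|B|\geq 2\).

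\textbf{Step 2: extremal choice.} Suppose, for contradiction, that \(M/e\) is disconnected for every \(e\). Among all pairs \((e,A)\) with \(A\) a separator of \(M/e\), choose one with \(|A|\) minimal. Pick \(f\in A\). By assumption \(M/f\) has a separator, and Step 1 applies to \(f\), giving a partition \(E\setminus f=C\sqcup D\) with \(\lambda_M(C)=\lambda_M(D)=1\) and \(f\in\operatorname{cl}(C)\cap\operatorname{cl}(D)\). Relabel so that \(e\in D\).

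\textbf{Step 3: uncrossing (the main obstacle).} Use submodularity, \(\lambda(X)+\lambda(Y)\geq\lambda(X\cap Y)+\lambda(X\cup Y)\), applied to the sets \(A\) and \(C\) (or to \(A\) and \(D\) if needed). The goal is to show that \(A\cap C\) or \(A\cap D\) is a nonempty separator of \(M/f\) or of \(M/e\) that is strictly smaller than \(A\), which contradicts minimality. Concretely:
\begin{itemize}
\item Since \(e\) lies in neither \(A\) nor \(C\), the sets \(A\cap C\) and \(E\setminus(A\cup C)\supseteq\{e\}\) are the relevant uncrossed sets.
\item Connectivity of \(M\) gives \(\lambda_M\geq 1\) on all four uncrossed sets whenever they are nonempty and proper. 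Since \(\lambda_M(A)=\lambda_M(C)=1\), this forces equality throughout.
\item Then I would check, via the closure conditions \(e\in\operatorname{cl}(A)\), \(f\in\operatorname{cl}(C)\) together with the displayed formula for \(\lambda_{M/e}\), that \(A\cap C\) is a separator of \(M/f\) when \(A\cap C\neq\emptyset\). Since \(f\notin A\cap C\), it is a proper subset of \(A\), giving the contradiction.
\end{itemize}
The degenerate case \(A\cap C=\emptyset\) means \(C\subseteq B\). In that case I would instead swap the roles and work with \(A\cap D\) or with \(C\) itself, noting \(|C|<|A|\) or symmetric considerations. I expect this case bookkeeping to be where care is needed.

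If the uncrossing becomes messy, the fallback is the classical route. By Tutte's theorem, for every \(e\) either \(M\setminus e\) or \(M/e\) is connected. One then uses the known fact that a connected simple matroid whose single-element deletions are all connected, with no connected single-element contraction, cannot exist. The extremal argument above is exactly a proof of that fact.
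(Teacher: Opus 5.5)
Your route is genuinely different from the paper's and can be made to work, but Step 3, the only nontrivial step, is a plan rather than a proof, and one of your suggested escapes from the degenerate case is wrong. The paper argues by duality. If every \(M/e\) is disconnected, then \(M^*\) is minimally connected, and Oxley's Proposition 4.3.9 gives a two-element cocircuit of \(M^*\), i.e.\ a parallel pair in \(M\). Your extremal argument, once completed, is a self-contained proof of exactly this dual fact: it shows that a minimal separator of some \(M/e\) is a singleton \(\{a\}\) with \(e\in\operatorname{cl}(\{a\})\). So it trades the citation for an uncrossing computation; the paper's version is shorter, yours is elementary. Steps 1 and 2 are correct. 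Your fallback, however, is circular: the ``known fact'' you invoke is the lemma restated, and Tutte's theorem adds nothing. Two minor points: \(U_{0,1}\) is not simple, and the empty matroid cannot be disposed of, since it has no element \(e\) at all.

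What is missing in Step 3 is the following. Your formula says \(\lambda_{M/x}(X)=\lambda_M(X)+r(X\cup x)-r(X)-1\). Hence for nonempty \(X\subsetneq E\setminus x\), \(X\) is a separator of \(M/x\) iff \(\lambda_M(X)=1\) and \(x\in\operatorname{cl}(X)\).

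\emph{Case \(A\cap C\neq\emptyset\).} Since \(e\notin A\cup C\), tightness in \(\lambda(A)+\lambda(C)\geq\lambda(A\cap C)+\lambda(A\cup C)\) gives \(\lambda(A\cap C)=1\). Crucially, it also gives the modular equality \(r(A)+r(C)=r(A\cap C)+r(A\cup C)\); the rank equality, not just equality of \(\lambda\)-values, is what you must extract. Submodularity for \(A\) and \(C\cup f\), using \(f\in A\) and \(f\in\operatorname{cl}(C)\), yields \(r(A)+r(C)\geq r((A\cap C)\cup f)+r(A\cup C)\). Hence \(f\in\operatorname{cl}(A\cap C)\), and \(A\cap C\) is a separator of \(M/f\) that is smaller than \(A\). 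The condition \(e\in\operatorname{cl}(A)\) you cite plays no role here.

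\emph{Case \(A\cap C=\emptyset\).} Here your suggestion \(|C|<|A|\) is unfounded, since \(C\subseteq B\) may be large. Instead uncross \(A\) with \(D\). Now \(A\cap D=A\setminus f\), which is nonempty exactly because \(|A|\geq 2\); this is where simplicity enters. Also \(A\cup D=E\setminus C\) is proper. Tightness gives \(\lambda(A\setminus f)=1\) and \(r(A)+r(D)=r(A\setminus f)+r(D\cup f)=r(A\setminus f)+r(D)\). Thus \(f\in\operatorname{cl}(A\setminus f)\), and \(A\setminus f\) is a smaller separator of \(M/f\), with complement \(B\cup e\neq\emptyset\). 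With these two computations your argument is complete.
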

\begin{proof}
  Suppose that \(M/e\) is disconnected for every \(e\in E\). Then \(M^*\) is a minimally connected matroid. By \cite[Proposition~4.3.9]{oxley2011matroid}, every minimally connected matroid has a size-two cocircuit, so \(M\) has a size \(2\) circuit, contradicting simplicity.
\end{proof}

\begin{lemma}\label{lem:series-extension}
  Let \(M\) be a connected paving matroid of rank \(r\geq 3\), and suppose that \(e\in E\) is such that \(M\setminus e\) is disconnected. Then \(M\) is a series extension of \(M/e\).
\end{lemma}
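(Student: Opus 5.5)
The plan is to show that \(e\) lies in a \(2\)-element cocircuit \(\{e,f\}\) of \(M\). Then \(e\) and \(f\) are in series, which is exactly the statement that \(M\) is a series extension of \(M/e\), obtained by adding \(e\) in series with \(f\). The route is to find a coloop of \(M\setminus e\) by exploiting the paving hypothesis. Then we convert that coloop into a series pair using the connectivity of \(M\).

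First I would record two basic facts. Since \(M\) is connected with at least two elements, it has no loops and no coloops; in particular \(e\) is not a coloop, so \(r(M\setminus e)=r\). Also, every circuit of \(M\setminus e\) is a circuit of \(M\), hence has at least \(r\) elements. Now write \(M\setminus e=M_1\oplus M_2\) with nonempty ground sets \(E_1,E_2\) and ranks \(r_1+r_2=r\). Both \(r_i\geq 1\), since \(M\) is loopless (a loop would be a circuit of size \(1<r\)).

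The key step is a rank count. Every circuit of \(M\setminus e\) lies entirely in \(E_1\) or in \(E_2\). A circuit \(C\subseteq E_i\) satisfies \(|C|\leq r_i+1\), and the paving condition gives \(|C|\geq r\), so \(r_i\geq r-1\). If both \(M_1\) and \(M_2\) contained circuits, we would get \(r=r_1+r_2\geq 2r-2\), that is \(r\leq 2\), contradicting \(r\geq 3\). More precisely, a circuit in \(E_1\) forces \(r_2\leq 1\), while a circuit in \(E_2\) forces \(r_2\geq r-1\geq 2\). Hence at least one summand, say \(M_2\), has no circuits, i.e.\ is free. Every element \(f\in E_2\) is then a coloop of \(M\setminus e\). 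This is the step where paving and \(r\geq 3\) are genuinely used, and it is the only real obstacle. In general a disconnected deletion need not produce a coloop, so this count is what makes the argument work.

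Finally, fix \(f\in E_2\). Since \(f\) is a coloop of \(M\setminus e\), the set \(\{f\}\) is a cocircuit of \(M\setminus e\). Cocircuits of a deletion are the minimal nonempty sets of the form \(D\setminus e\) with \(D\) a cocircuit of \(M\), so some cocircuit \(D\) of \(M\) satisfies \(D\setminus e=\{f\}\), giving \(D=\{f\}\) or \(D=\{e,f\}\). But \(M\) has no coloops, so \(D=\{e,f\}\) is a \(2\)-element cocircuit. Thus \(e\) and \(f\) are in series, and \(M\) is the series extension of \(M/e\) at \(f\). This is a legitimate series extension in the sense of \cref{def:g-poly}: \(f\) is not a loop of \(M/e\), because \(\{e,f\}\) is independent in \(M\) (as \(r\geq 3\) forbids \(2\)-element circuits).
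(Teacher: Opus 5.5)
Your proof is correct and follows essentially the same route as the paper: the paving rank count \(|C|\le r_i+1\) forces one summand of \(M\setminus e\) to have a coloop \(f\), and since \(M\) has no coloops, \(\{e,f\}\) is a \(2\)-element cocircuit of \(M\). The differences are cosmetic: you show a whole summand is free, and you obtain the cocircuit from the description of cocircuits of a deletion rather than from the dual rank \(r_{M^*}\). Your extra check that \(f\) is not a loop of \(M/e\) is a welcome detail that the paper leaves implicit.
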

\begin{proof}
  We first show that \(M\setminus e\) has a coloop \(f\). Since \(M\setminus e\) is disconnected, we can write \(M\setminus e=M|A\oplus M|B\), \(A\sqcup B=E\setminus e\). Let \(a=r(A)\) and \(b=r(B)\). Since \(r\geq 3\) and \(M\) is connected, \(M\) is both loopless and coloopless. Then
  \begin{itemize}
    \item \(a+b=r\), since \(e\) is not a coloop.
    \item \(a,b\geq 1\) since \(M\) is loopless.
  \end{itemize}
  Suppose \(M|A\) is coloopless. Then every element of \(A\) is in some circuit \(C\) of \(M|A\). Since \(M\) is paving, \(r\leq |C|\leq r(C)+1\leq a+1\). Hence \(a\geq r-1\) and \(b=r-a\leq 1\), so \(b=1\).

  Similarly, if \(M|B\) is also coloopless, then \(a=1\), so \(r=a+b=2\), contradicting \(r\geq 3\). We conclude that \(M\setminus e\) must have a coloop \(f\).

  Since \(f\) is not a coloop of \(M\) but is a coloop of \(M\setminus e\), we have \(r_{M^*}(\{e,f\})=r_{M^*}(\{e\})\). Hence there is a cocircuit contained in \(\{e,f\}\). Since \(M\) is coloopless, \(\{e,f\}\) is a cocircuit, and hence \(M\) is a series extension of \(M/e\).
\end{proof}

\begin{corollary}\label{cor:deletion-disconnected}
  If \(M\setminus e\) is disconnected, then \(g_M(t)=g_{M/e}(t)\).
\end{corollary}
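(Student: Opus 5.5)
The corollary should follow directly from \cref{lem:series-extension} together with property (2) of \cref{def:g-poly}. Throughout, \(M\) is the rank \(r\geq 3\) connected paving matroid fixed at the start of this section, and \(e\in E\) is an element with \(M\setminus e\) disconnected. By \cref{lem:series-extension}, \(M\) is a series extension of \(M/e\). Concretely, the proof of that lemma produces an element \(f\neq e\) such that \(\{e,f\}\) is a cocircuit of \(M\). Contracting \(e\) then leaves \(f\) in its place, so \(M\) is obtained from \(M/e\) by adding \(e\) in series with \(f\).

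The only point to check is the exclusion clause in property (2): series extensions at loops are not allowed. So we must confirm that \(f\) is not a loop of \(M/e\). This is where I expect the (mild) obstacle to lie. Suppose \(f\) were a loop of \(M/e\). Then \(\{e,f\}\) would contain a circuit of \(M\). Because \(M\) is connected with \(r\geq 3\), it is loopless, so that circuit would have to be \(\{e,f\}\) itself. This contradicts paving, since every circuit has at least \(r\geq 3\) elements. Alternatively, the cocircuit \(\{e,f\}\) shows that \(e\) and \(f\) are in series; if they were also parallel, then \(M\) would have a \(U_{1,2}\) direct summand. Connectedness would force \(M=U_{1,2}\), contradicting \(r\geq 3\).

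With this verified, property (2) gives invariance of the \(g\)-polynomial under this series extension, and hence \(g_M(t)=g_{M/e}(t)\). No covaluativity or explicit formula is needed.
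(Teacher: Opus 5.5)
Your proof is correct and follows the paper's argument: \cref{lem:series-extension} shows that \(M\) is a series extension of \(M/e\), and property (2) of \cref{def:g-poly} then gives \(g_M(t)=g_{M/e}(t)\). Your extra check that \(f\) is not a loop of \(M/e\) is correct; it holds because \(\{e,f\}\) cannot be a circuit of a paving matroid of rank \(r\geq 3\), and it makes explicit the exclusion clause that the paper leaves implicit.
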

\begin{proof}
  By \cref{lem:series-extension}, \(M\) is a series extension of \(M/e\), and the \(g\)-polynomial is invariant under series extensions by \cref{def:g-poly}.
\end{proof}

Let \(\mathcal{H}\) be the set of hyperplanes of \(M\) (i.e., the flats of corank one) of size at least \(r\). For integers \(2\leq r\leq m\), set
\[
  p_{r,m}(t)=g_{U_{r,m}}(t)+t\,g_{U_{r-1,m-1}}(t).
\]
In particular, \(p_{r,r}(t)=0\). Recall from \cite[Proposition~9.16]{ferroni2024valuative} that for \(1\leq r<m\),
\begin{equation}\label{eq:uniform}
  g_{U_{r,m}}(t)=\sum_{k=1}^{r} \binom{m-k-1}{r-k} \binom{m-r-1}{k-1}t^k.
\end{equation}

We need the following elementary lemma.

\begin{lemma}\label{lem:recursion}
  For integers \(r\geq 2\) and \(m\geq r\), we have
  \begin{equation}\label{eq:uniform-recursion}
    g_{U_{r,m+1}}(t)-g_{U_{r,m}}(t) =g_{U_{r-1,m}}(t)+t\,g_{U_{r-1,m-1}}(t),
  \end{equation}
  and for integers \(r\geq 3\) and \(m\geq r\), we have
  \begin{equation}\label{eq:p-recursion}
    p_{r,m+1}(t)-p_{r,m}(t)=p_{r-1,m}(t)+t\,p_{r-1,m-1}(t).
  \end{equation}
\end{lemma}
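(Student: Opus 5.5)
The first identity \eqref{eq:uniform-recursion} is a coefficient-by-coefficient check using \eqref{eq:uniform}. The second identity \eqref{eq:p-recursion} then follows formally from the first, by linearity.

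\textbf{Extending the closed formula.} I would first make sure \eqref{eq:uniform} may be used for every matroid that appears, including the boundary cases \(U_{s,s}\) and \(U_{1,m}\).
\begin{itemize}
  \item For \(U_{s,s}\) (all elements coloops) we have \(g=0\) by \cref{def:g-poly}(1). The right side of \eqref{eq:uniform} with \(m=s\) also vanishes, because every term contains the factor \(\binom{-1}{k-1}\), which is \(0\) by our convention.
  \item For \(U_{1,m}\) with \(m\geq 2\), the formula gives \(t\). This agrees with \cref{def:g-poly}, since \(U_{1,m}\) is a parallel extension of \(U_{1,2}\).
\end{itemize}
So \eqref{eq:uniform} holds for all \(1\leq s\leq m\).

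\textbf{Coefficient comparison.} Fix \(k\) and compare coefficients of \(t^k\) in \eqref{eq:uniform-recursion}. The right side contributes
\[
  \binom{m-k-1}{r-k-1}\binom{m-r}{k-1}+\binom{m-k-1}{r-k}\binom{m-r-1}{k-2}.
\]
The left side is
\[
  \binom{m-k}{r-k}\binom{m-r}{k-1}-\binom{m-k-1}{r-k}\binom{m-r-1}{k-1}.
\]
Apply Pascal's rule to the first binomial in the left side's first product, \(\binom{m-k}{r-k}=\binom{m-k-1}{r-k}+\binom{m-k-1}{r-k-1}\). The second summand reproduces the first term on the right. Then apply Pascal to \(\binom{m-r}{k-1}=\binom{m-r-1}{k-1}+\binom{m-r-1}{k-2}\) in the remaining product \(\binom{m-k-1}{r-k}\binom{m-r}{k-1}\). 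One piece cancels the subtracted product, and the other is the second term on the right.

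\textbf{Main obstacle.} The only delicate point is that Pascal's rule fails under our convention exactly at \(\binom{0}{0}\neq\binom{-1}{0}+\binom{-1}{-1}\). This affects \(\binom{m-k}{r-k}\) when \(k=r=m\), and \(\binom{m-r}{k-1}\) when \(m=r\), \(k=1\). So I would treat \(m=r\) separately. In that case \(g_{U_{r,r}}=0\) and \(g_{U_{r-1,r-1}}=0\), so the identity reduces to \(g_{U_{r,r+1}}=g_{U_{r-1,r}}\). Both sides equal \(t\), because \(U_{s,s+1}\) is a series extension of \(U_{1,2}\); this also follows directly from \eqref{eq:uniform}. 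For \(m>r\) the Pascal steps are valid, since \(m-k-1\geq 0\) and \(m-r-1\geq 0\).

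\textbf{The \(p\)-recursion.} For \(r\geq 3\) and \(m\geq r\), expand
\[
  p_{r,m+1}-p_{r,m}=\bigl(g_{U_{r,m+1}}-g_{U_{r,m}}\bigr)+t\bigl(g_{U_{r-1,m}}-g_{U_{r-1,m-1}}\bigr).
\]
Apply \eqref{eq:uniform-recursion} at \((r,m)\) and at \((r-1,m-1)\); the latter is legitimate since \(r-1\geq 2\) and \(m-1\geq r-1\). This gives
\[
  g_{U_{r-1,m}}+t\,g_{U_{r-1,m-1}}+t\,g_{U_{r-2,m-1}}+t^2g_{U_{r-2,m-2}}.
\]
Regrouped, this is \(p_{r-1,m}+t\,p_{r-1,m-1}\), with all indices in the allowed range \(2\leq r-1\leq m-1\).
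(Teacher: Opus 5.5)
Your proposal is correct and follows the paper's proof essentially step for step. You treat \(m=r\) separately, where both sides equal \(t\); for \(m>r\) you match the coefficient of \(t^k\) using the same two applications of Pascal's rule; and you get \eqref{eq:p-recursion} by applying \eqref{eq:uniform-recursion} at \((r,m)\) and at \((r-1,m-1)\). Your extra care is a useful justification of details the paper leaves implicit: you locate exactly where Pascal's rule fails under the binomial convention, and you extend \eqref{eq:uniform} to \(U_{s,s}\). One small point: restricting to \(1\le k\le r\), as the paper does, makes your stated reason \(m-k-1\ge 0\) literally true for every \(k\) considered.
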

\begin{proof}
  If \(m=r\), both sides of \eqref{eq:uniform-recursion} equal \(t\), by \eqref{eq:uniform}. Let \(m\geq r+1\) and \(1\leq k\leq r\). By \eqref{eq:uniform} and Pascal's rule, the coefficient of \(t^k\) on the left-hand side of \eqref{eq:uniform-recursion} is
  \begin{align*}
    &\binom{m-k}{r-k}\binom{m-r}{k-1}-\binom{m-k-1}{r-k}\binom{m-r-1}{k-1}\\
    &\qquad=\binom{m-k-1}{r-k-1}\binom{m-r}{k-1}+\binom{m-k-1}{r-k}\left[\binom{m-r}{k-1}-\binom{m-r-1}{k-1}\right]\\
    &\qquad=\binom{m-k-1}{r-k-1}\binom{m-r}{k-1}+\binom{m-k-1}{r-k}\binom{m-r-1}{k-2},
  \end{align*}
  which is the coefficient of \(t^k\) in \(g_{U_{r-1,m}}(t)+t\,g_{U_{r-1,m-1}}(t)\) by \eqref{eq:uniform}. Since both sides of \eqref{eq:uniform-recursion} have degree at most \(r\) and zero constant term, \eqref{eq:uniform-recursion} follows.

  \eqref{eq:p-recursion} can be obtained by applying \eqref{eq:uniform-recursion} to \((r,m)\) and to \((r-1,m-1)\).
\end{proof}

\begin{proposition}\label{prop:deletion-contraction}
  Suppose that \(e\in E\) is such that both \(M/e\) and \(M\setminus e\) are connected. Then
  \[
    g_M(t)=g_{M/e}(t)+g_{M\setminus e}(t)+t\left(g_{U_{r-1,n-2}}(t)-\sum_{e\in H}p_{r-1,|H|-1}(t)\right),
  \]
  where \(H\) ranges over \(\mathcal{H}\).
\end{proposition}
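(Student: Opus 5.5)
The plan is to apply the Ferroni--Schröter formula \cite[Theorem~9.18]{ferroni2024valuative} three times, to \(M\), \(M/e\) and \(M\setminus e\), subtract, and simplify with \cref{lem:recursion}. That formula writes the \(g\)-polynomial of a paving matroid of rank \(r\) on \(n\) elements as \(g_{U_{r,n}}(t)\) minus a sum over the hyperplanes in \(\mathcal{H}\) of a correction term \(p_{r,\phi(|H|)}(t)\). Here \(\phi(m)\) is either \(m\) or \(m+1\), depending on the exact normalization in the cited theorem, which I would check first. The proof then reduces to bookkeeping.

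\textbf{Step 1: identify the hyperplanes of the two minors.} Contractions and deletions of paving matroids are paving, so \(M/e\) and \(M\setminus e\) are again paving, and by hypothesis both are connected.
\begin{itemize}
\item \(M/e\) has rank \(r-1\) on \(n-1\) elements. Its hyperplanes of size \(\geq r-1\) are exactly the sets \(H\setminus e\) with \(e\in H\) and \(H\) a hyperplane of \(M\) of size \(\geq r\). If \(|H|=r-1\), the corresponding term is \(p_{r-1,r-1}=0\) anyway.
\item \(M\setminus e\) has rank \(r\), since \(e\) is not a coloop. Its hyperplanes of size \(\geq r\) are the \(H\in\mathcal H\) with \(e\notin H\), unchanged, together with the sets \(H\setminus e\) for \(e\in H\in\mathcal H\) with \(|H|\geq r+1\).
\end{itemize}
The justification for the second item is that in a paving matroid any \(r-1\) elements of a hyperplane are independent and span it. Hence deleting \(e\) from a hyperplane of size \(\geq r\) leaves a set of rank \(r-1\) that is still closed, and no new large hyperplanes appear. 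The hyperplanes \(H\ni e\) with \(|H|=r\) drop out of \(\mathcal H(M\setminus e)\). I expect to handle them by checking that the corresponding correction term vanishes at the boundary value (\(p_{r,r}=0\)) under the correct normalization.

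\textbf{Step 2: subtract and telescope.} Computing \(g_M-g_{M/e}-g_{M\setminus e}\), the terms from hyperplanes not containing \(e\) cancel. What remains is
\[
\Bigl[g_{U_{r,n}}-g_{U_{r,n-1}}-g_{U_{r-1,n-1}}\Bigr]-\sum_{e\in H}\Bigl[p_{r,\phi(|H|)}-p_{r,\phi(|H|-1)}-p_{r-1,\phi(|H|-1)}\Bigr].
\]
Applying \eqref{eq:uniform-recursion} with \(m=n-1\) turns the first bracket into \(t\,g_{U_{r-1,n-2}}(t)\). Applying \eqref{eq:p-recursion} with \(m=\phi(|H|)-1\) turns each summand into \(t\,p_{r-1,\phi(|H|)-2}(t)\).

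Matching this with the statement forces \(\phi(m)=m+1\), so the correction term should be \(p_{r,|H|+1}\), and I would confirm this against the cited theorem. The hypothesis \(r\geq3\) is exactly what \eqref{eq:p-recursion} requires.

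\textbf{Main obstacle.} The delicate part is the boundary cases:
\begin{itemize}
\item hyperplanes \(H\ni e\) with \(|H|=r\), which are present in \(M\) but absent from \(M\setminus e\);
\item the smallest allowed \(m\) in \cref{lem:recursion}.
\end{itemize}
For these I would verify directly from \eqref{eq:uniform} that the recursion still holds with the appropriate convention, namely that the missing term equals the value the recursion assigns. This is the case \(m=r\) already treated in the proof of \cref{lem:recursion}. Everything else is routine.
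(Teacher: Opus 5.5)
Your proposal is correct and is essentially the paper's proof. The paper applies \cite[Theorem~9.18]{ferroni2024valuative} in the form \(g_M(t)=g_{U_{r,n}}(t)-\sum_{H\in\mathcal H}p_{r,|H|+1}(t)\), so the normalization \(\phi(m)=m+1\) you deduced is the right one; it then identifies the large hyperplanes of \(M/e\) and \(M\setminus e\) as you do, absorbs the hyperplanes \(H\ni e\) with \(|H|=r\) via \(p_{r,r}=0\), and concludes with \eqref{eq:uniform-recursion} at \(m=n-1\) and \eqref{eq:p-recursion} at \(m=|H|\). The boundary cases you flag need no separate verification, since \cref{lem:recursion} is already stated for all \(m\geq r\), and \(|H|\geq r\) for every \(H\in\mathcal H\).
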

\begin{proof}
  By \cite[Theorem~9.18]{ferroni2024valuative}, for a connected paving matroid we have
  \begin{equation}\label{eq:FS}
    g_M(t)=g_{U_{r,n}}(t)- \sum_{H\in\mathcal H}p_{r,|H|+1}(t).
  \end{equation}
  Since \(M/e\) and \(M\setminus e\) are both connected paving matroids,
  the same formula applies to \(M/e\) and \(M\setminus e\). The hyperplanes of \(M/e\) of size at least \(r-1\) are exactly the sets \(H\setminus e\) with \(H\in\mathcal{H}\) and \(e\in H\), hence
  \begin{equation}\label{eq:contraction}
    g_{M/e}(t)=g_{U_{r-1,n-1}}(t)- \sum_{e\in H} p_{r-1,|H|}(t).
  \end{equation}

  A hyperplane of \(M\setminus e\) of size at least \(r\) corresponds uniquely to either a hyperplane \(H\in\mathcal{H}\) with \(e\notin H\), or a set \(H\setminus e\) with \(H\in\mathcal{H}\), \(e\in H\) and \(|H|\geq r+1\). Indeed, if \(F\) is a hyperplane of \(M\backslash e\) with \(|F|\geq r\), then the closure of \(F\) in \(M\) is either \(F\) or \(F\cup \{e\}\), corresponding to the two cases described. Conversely, suppose \(H\in \mathcal{H}\). If \(e\not\in H\), then it is a hyperplane of \(M\backslash e\) and \(|H|\geq r\). If \(e\in H\) and \(|H|\geq r+1\), we claim that \(H\setminus e\) is a hyperplane of \(M\backslash e\). If not, \(r(H\setminus e)=r-2\), \(|H\setminus e|\geq r\), therefore \(H\setminus e\) contains a circuit of size \(\leq r-1\), contradicting pavingness of \(M\backslash e\). Thus \(H\setminus e\) is a hyperplane of \(M\backslash e\) of size at least \(r\).

  Since \(p_{r,r}(t)=0\), the hyperplanes \(H\in \mathcal{H}\) with \(e\in H\) and \(|H|=r\) contribute zero, hence
  \begin{equation}\label{eq:deletion}
    g_{M\setminus e}(t) =g_{U_{r,n-1}}(t) -\sum_{e\not\in H}p_{r,|H|+1}(t) -\sum_{e\in H}p_{r,|H|}(t).
  \end{equation}
  Combining \cref{eq:FS,eq:contraction,eq:deletion} with \cref{eq:uniform-recursion} (for \(m=n-1\)) and \cref{eq:p-recursion} (for \(m=|H|\), where \(H\in\mathcal{H}\) and \(e\in H\)), we obtain the desired identity.
\end{proof}

The key lemma is the following.

\begin{lemma}\label{lem:key}
  Let \(r\geq 2\) and \(m\geq r+2\) be integers, and let \(h_1,\ldots,h_s\) be integers satisfying \(r+1\leq h_j\leq m-1\) for each \(j\). If
  \[
    g_{U_{r,m}}(t)-\sum_j p_{r,h_j}(t)\in \Z_{\geq 0}[t],
  \]
  then
  \[
    g_{U_{r,m-1}}(t)-\sum_j p_{r,h_j-1}(t)\in \Z_{\geq 0}[t].
  \]
\end{lemma}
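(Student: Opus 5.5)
The plan is to reduce the statement to a coefficient-by-coefficient comparison: for each fixed \(k\), the ratio by which a single \(p_{r,h}\) coefficient shrinks when \(h\) drops to \(h-1\) should be no larger than the ratio by which the \(g_{U_{r,m}}\) coefficient shrinks when \(m\) drops to \(m-1\).

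\textbf{Step 1: closed forms.} Write \(A_k(m)=\binom{m-k-1}{r-k}\binom{m-r-1}{k-1}\) for the coefficient of \(t^k\) in \(g_{U_{r,m}}(t)\), by \eqref{eq:uniform}. From the definition of \(p_{r,h}\), \eqref{eq:uniform} and Pascal's rule, the coefficient of \(t^k\) in \(p_{r,h}(t)\) is
\[
  \binom{h-k-1}{r-k}\binom{h-r-1}{k-1}+\binom{h-k-1}{r-k}\binom{h-r-1}{k-2}=\binom{h-k-1}{r-k}\binom{h-r}{k-1}=:P_k(h).
\]
I would check that this formula is also valid at the boundary \(h=r+1\).

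\textbf{Step 2: reduce to a ratio inequality.} Fix \(1\le k\le r\). The hypothesis gives \(A_k(m)\ge\sum_j P_k(h_j)\), and we must show \(A_k(m-1)\ge\sum_j P_k(h_j-1)\). Suppose \(A_k(m)>0\). Set \(\rho_k(m)=A_k(m-1)/A_k(m)\). Cancelling binomials gives
\[
  \rho_k(m)=\frac{m-r-1}{m-k-1}\cdot\frac{m-r-k}{m-r-1}=\frac{m-r-k}{m-k-1}.
\]
It then suffices to prove \(P_k(h-1)\le\rho_k(m)\,P_k(h)\) for every \(r+1\le h\le m-1\). Summing over \(j\) and using the hypothesis gives
\[
  \sum_j P_k(h_j-1)\le\rho_k(m)\sum_j P_k(h_j)\le\rho_k(m)A_k(m)=A_k(m-1).
\]

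\textbf{Step 3: the ratio inequality.} Write \(x=m-k\). Then \(\rho_k(m)=(x-r)/(x-1)\), which is nondecreasing in \(x\) because \(r\ge1\). So for \(h\le m-1\) we have \(\rho_k(m)\ge\rho_k(h+1)=(h+1-r-k)/(h-k)\), and it suffices to show \(P_k(h-1)\le\rho_k(h+1)P_k(h)\).

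If \(P_k(h-1)=0\) there is nothing to prove. Otherwise \(h-r-k+1\ge1\), and a direct computation gives
\[
  \frac{P_k(h-1)}{P_k(h)}=\frac{(h-r-1)(h-r-k+1)}{(h-k-1)(h-r)}.
\]
After cancelling the positive factor \(h-r-k+1\), the desired inequality becomes \((h-r-1)(h-k)\le(h-k-1)(h-r)\). Expanding both sides, this is equivalent to \(k\le r\), which holds.

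\textbf{Main obstacle: degenerate cases.} The point needing most care is the case \(A_k(m)=0\), or more generally whenever a denominator above vanishes (for instance \(h-k-1\) when \(k=r\)). I would treat these directly. If \(P_k(h_j-1)\neq0\) for some \(j\), then \(h_j-1-r\ge k-1\); combined with \(h_j\le m-1\) this forces \(m-r-1\ge k\). In that range \(A_k(m-1)\) and \(A_k(m)\) are both positive, so the ratio argument of Steps 2 and 3 applies. If instead \(P_k(h_j-1)=0\) for all \(j\), the conclusion is immediate since \(A_k(m-1)\ge0\). The remaining edge case \(k=r\) with a zero denominator should be handled by rechecking the same cancellation with the binomial convention \(\binom{a}{0}=1\).
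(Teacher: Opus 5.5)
Your proposal is correct and follows the paper's proof essentially step for step: you compare, coefficient by coefficient, the shrinkage ratio \((m-r-k)/(m-k-1)\) of \(g_{U_{r,m}}\) with the ratio \(\frac{(h-r-1)(h-r-k+1)}{(h-k-1)(h-r)}\) of \(p_{r,h}\), and then sum over \(j\). The only cosmetic difference is that you use monotonicity of the uniform-matroid ratio in \(m\) and compare at \(m=h+1\), whereas the paper uses monotonicity of the \(p_{r,h}\)-ratio in \(h\) and compares at \(h=m-1\); both reduce to the same inequality, which is equivalent to \(k\le r\), and your feared zero-denominator case is vacuous because \(P_k(h-1)\neq 0\) forces \(h\geq r+2\).
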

\begin{proof}
  Write \([t^k]f(t)\) for the coefficient of \(t^k\) in \(f(t)\), and fix \(k\geq 1\). By \eqref{eq:uniform}, the polynomials \(p_{r,h_j}(t)\) and \(p_{r,h_j-1}(t)\) have nonnegative coefficients, and \([t^k]p_{r,h_j-1}(t)=0\) unless \(k\leq r\) and \(h_j\geq r+k\). If \(k>r\), or if \(h_j<r+k\) for all \(j\), there is nothing to prove. Otherwise, discarding the \(j\) with \(h_j<r+k\), we may assume that \(k\leq r\) and \(r+k\leq h_j\leq m-1\) for all \(j\). In particular, \(m\geq r+k+1\). Then
  \[
    \frac{[t^k]g_{U_{r,m-1}}(t)}{[t^k]g_{U_{r,m}}(t)}=\frac{m-r-k}{m-k-1},
  \]
  and a direct calculation using the definition of \(p_{r,h_j}(t)\) gives
  \[
    \frac{[t^k]p_{r,h_j-1}(t)}{[t^k]p_{r,h_j}(t)}=\frac{h_j-r-1}{h_j-r}\cdot\frac{h_j-r-k+1}{h_j-k-1}=\Bigl(1-\frac{1}{h_j-r}\Bigr)\Bigl(1-\frac{r-2}{h_j-k-1}\Bigr).
  \]
  Both factors are nonnegative and nondecreasing in \(h_j\geq r+k\), as \(r\geq 2\). Since \(h_j\leq m-1\), we get
  \[
    \frac{[t^k]p_{r,h_j-1}(t)}{[t^k]p_{r,h_j}(t)}\leq \frac{m-r-2}{m-r-1}\cdot\frac{m-r-k}{m-k-2}\leq \frac{m-r-k}{m-k-1},
  \]
  where the second inequality amounts to \(\frac{m-r-2}{m-r-1}\leq \frac{m-k-2}{m-k-1}\), which holds when \(k\leq r\). Summing over \(j\) and using the hypothesis, we conclude that
  \[
    \sum_j [t^k]p_{r,h_j-1}(t)\leq \frac{m-r-k}{m-k-1}\sum_j [t^k]p_{r,h_j}(t)\leq \frac{m-r-k}{m-k-1}\,[t^k]g_{U_{r,m}}(t)=[t^k]g_{U_{r,m-1}}(t).\qedhere
  \]
\end{proof}

\begin{proof}[Proof of \cref{thm:main}]
  We induct on \(|E|\) over connected paving matroids of all ranks. The case \(r\leq 1\) follows from \cref{def:g-poly}. If \(r=2\), connectedness implies that the simplification of \(M\) is \(U_{2,m}\) for some \(m\geq 3\). By parallel-extension invariance and \eqref{eq:uniform},
  \[
    g_M(t)=g_{U_{2,m}}(t)=(m-2)t+(m-3)t^2\in\Z_{\geq 0}[t].
  \]

  Let \(r\geq 3\). Since \(M\) is paving, \(M\) is simple. Since \(M\) is connected, \(n\geq r+1\). If \(n=r+1\), then \(M\cong U_{r,r+1}\) and \(g_M(t)=t\) by \eqref{eq:uniform}. Hence we may assume that \(n\geq r+2\).

  By \cref{lem:contraction-connected}, we can find an \(e\in E\) such that \(M/e\) is connected. If \(M\setminus e\) is disconnected, then \(g_M(t)=g_{M/e}(t)\in \Z_{\geq 0}[t]\) by \cref{cor:deletion-disconnected}, since \(M/e\) has \(|E|-1\) elements.

  If both \(M/e\) and \(M\backslash e\) are connected, we can use \cref{prop:deletion-contraction}. By the induction hypothesis, both \(g_{M/e}(t)\) and \(g_{M\setminus e}(t)\) have nonnegative coefficients. We only need to show the polynomial in the parentheses has nonnegative coefficients. Since \(g_{M/e}(t)\) has nonnegative coefficients, we have
  \[
    g_{U_{r-1,n-1}}(t)-\sum_{e\in H} p_{r-1,|H|}(t)\in \Z_{\geq 0}[t].
  \]
  Since \(r-1\geq 2\), \(n-1\geq (r-1)+2\), and \(r\leq |H|\leq n-2\) for every \(H\in\mathcal{H}\) (as \(M\) has no coloops), \cref{lem:key} applies and gives
  \[
    g_{U_{r-1,n-2}}(t)-\sum_{e\in H} p_{r-1,|H|-1}(t)\in \Z_{\geq 0}[t],
  \]
  and therefore \(g_M(t)\in \Z_{\geq 0}[t]\).
\end{proof}

\printbibliography

\end{document}